\newtheorem{thm}{Theorem}
\theoremstyle{definition}
\newtheorem{ex}[thm]{Example}
\providecommand{\abs}[1]{\lvert#1\rvert}
\begin{document}
\title[Anscombe theorem]{An Anscombe-type theorem}

\author{Patrizia Berti}
\address{Patrizia Berti, Dipartimento di Matematica Pura ed Applicata ''G. Vitali'', Universita' di Modena e Reggio-Emilia, via Campi 213/B, 41100 Modena, Italy}
\email{patrizia.berti@unimore.it}

\author{Irene Crimaldi}
\address{Irene Crimaldi, IMT Institute for Advanced Studies, Piazza San Ponziano 6, 55100 Lucca, Italy}
\email{irene.crimaldi@imtlucca.it}

\author{Luca Pratelli}
\address{Luca Pratelli, Accademia Navale, viale Italia 72, 57100 Livorno,
Italy} \email{pratel@mail.dm.unipi.it}

\author{Pietro Rigo}
\address{Pietro Rigo (corresponding author), Dipartimento di Matematica ''F. Casorati'', Universita' di Pavia, via Ferrata 1, 27100 Pavia, Italy}
\email{pietro.rigo@unipv.it}

\keywords{Anscombe theorem, Exchangeability, Random indices, Random sums, Stable convergence.} \subjclass[2010]{60B10, 60F05, 60G09, 60G57.}

\begin{abstract} Let $(X_n)$ be a sequence of random variables (with values in a separable metric space) and $(N_n)$ a sequence of random indices. Conditions for $X_{N_n}$ to converge stably (in particular, in distribution) are provided. Some examples, where such conditions work but those already existing fail, are given as well.
\end{abstract}

\maketitle

\section{Introduction}\label{intro}

Anscombe's theorem (AT) gives conditions for $X_{N_n}$ to converge in distribution, where $(X_n)$ is a sequence of random variables and $(N_n)$ a sequence of random indices. Roughly speaking, such conditions are: (i) $N_n\rightarrow\infty$ in some sense; (ii) $X_n$ converges in distribution; (iii) For large $n$, $X_j$ is close to $X_n$ provided $j$ is close to $n$. (Precise definitions are given in Subsection \ref{classical}).

In particular, in AT, condition (i) is realized as

\vspace{0.2cm}

\begin{itemize}

\item[(a)] $N_n/k_n\overset{P}\longrightarrow u$, where $k_n>0$ and $u>0$ are constants and $k_n\rightarrow\infty$.

\end{itemize}

\vspace{0.2cm}

\noindent Under (a), it is very hard to improve on AT. The only possibility is to look for some optimal form of condition (iii). See e.g. \cite{MNRT}.

But condition (a) is often generalized into

\vspace{0.2cm}

\begin{itemize}

\item[(a*)] $N_n/k_n\overset{P}\longrightarrow U$, where $U>0$ is a random variable.

\end{itemize}

\vspace{0.2cm}

\noindent For instance, condition (a*) suffices for $X_{N_n}$ to converge in distribution in case $X_n=n^{-1/2}\sum_{i=1}^n\bigl\{Z_i-E(Z_1)\bigr\}$, where $(Z_n)$ is an i.i.d. sequence with $E(Z_1^2)<\infty$. However, under (a*), convergence in distribution of $X_n$ is not enough. To get converge in distribution of $X_{N_n}$, condition (ii) is to be strengthened.

One natural solution is to request {\em stable} convergence of $X_n$. This is made precise by a result of Zhang Bo \cite{ZHANG} (Theorem \ref{n49v56} in the sequel). According to Theorem \ref{n49v56}, $X_{N_n}$ converges stably (in particular, in distribution) provided $X_n$ converges stably, condition (a*) holds, and some form of (iii) is satisfied. The statement of (iii) depends on whether $U$ is, or it is not, discrete.

In this paper, Theorem \ref{n49v56} is (strictly) improved. Our main result (Theorem \ref{b56th9k} in the sequel) has two possible merits. It does not depend on whether $U$ is discrete. And, more importantly, it requests a form of (iii) weaker than the corresponding one in Theorem \ref{n49v56}. Indeed, in Theorem \ref{n49v56}, the asked version of (iii) does not involve the $N_n$. As a consequence, it potentially works for {\em every} sequence $(N_n)$ of random times but it is also rather strong. Instead, in Theorem \ref{b56th9k}, we exploit a form of (iii) which is tailor-made on the particular sequence of random times at hand.

A few examples, where Theorem \ref{b56th9k} works but Theorem \ref{n49v56} fails, are given as well. We mention Examples \ref{birgikjh} and \ref{costolina} concerning the exchangeable CLT and the exchangeable empirical process.

\section{Stable convergence}\label{stable}

Let $\mathcal{X}$ be a metric space and $(\Omega,\mathcal{A},P)$ a probability space. A {\em kernel} (or a {\em random
probability measure}) on $\mathcal{X}$ is a map $K$ on $\Omega$ such that:

\begin{itemize}

\item[$-$] $K(\omega)$ is a Borel probability measure on $\mathcal{X}$ for each
$\omega\in\Omega$;

\item[$-$] $\omega\mapsto K(\omega)(B)$ is $\mathcal{A}$-measurable for each Borel set $B\subset \mathcal{X}$.

\end{itemize}

\noindent For every bounded Borel function $f:\mathcal{X}\rightarrow\mathbb{R}$, we let $K(f)$ denote the real random variable
\begin{equation*}
K(\omega)(f)=\int f(x)\,K(\omega)(dx).
\end{equation*}

Let $(X_n)$ be a sequence of $\mathcal{X}$-valued random variables on $(\Omega,\mathcal{A},P)$. Given a Borel probability measure $\mu$ on $\mathcal{X}$, say that $X_n$ converges in distribution to $\mu$ if $\mu(f)=\lim_nE\bigl\{f(X_n)\bigr\}$ for all bounded continuous functions $f:\mathcal{X}\rightarrow\mathbb{R}$. In this case, we also write $X_n\overset{d}\longrightarrow X$ for any $\mathcal{X}$-valued random variable $X$ with distribution $\mu$. Next, let $\mathcal{G}\subset\mathcal{A}$ be a sub-$\sigma$-field and $K$ a kernel on $\mathcal{X}$. Say that $X_n$ {\em converges $\mathcal{G}$-stably to} $K$ if
\begin{equation*}
E\bigl\{K(f)\mid H\bigr\}=\lim_nE\bigl\{f(X_n)\mid H\bigr\}
\end{equation*}
for all $H\in\mathcal{G}$ with $P(H)>0$ and all bounded continuous $f:\mathcal{X}\rightarrow\mathbb{R}$.

$\mathcal{G}$-stable convergence always implies convergence in distribution (just let $H=\Omega$). Further, it reduces to convergence in distribution for $\mathcal{G}=\{\emptyset,\Omega\}$ and is connected to convergence in probability for $\mathcal{G}=\mathcal{A}$. Suppose in fact $\mathcal{X}$ is separable and take an $\mathcal{X}$-valued random variable $X$ on $(\Omega,\mathcal{A},P)$. Then, $X_n\overset{P}\longrightarrow X$ if and only if $X_n$ converges $\mathcal{A}$-stably to the kernel $K=\delta_X$.

We refer to \cite{CLP} and references therein for more on stable convergence.

\section{Results}

\subsection{Notation}

All random variables appearing in the sequel, unless otherwise stated, are defined on a fixed probability space $(\Omega,\mathcal{A},P)$.

Let $(S,d)$ be a separable metric space. The basic ingredients are three sequences
\begin{gather*}
(X_n:n\geq 0),\quad (N_n:n\geq 0),\quad (k_n:n\geq 0),
\end{gather*}
where the $X_n$ are $S$-valued random variables, the $N_n$ are random times (i.e., random variables with values in $\{0,1,2,\ldots\}$) and the $k_n$ are strictly positive constants such that $k_n\rightarrow\infty$. We let
\begin{gather*}
M_n(\delta)=\max_{j:\abs{n-j}\leq n\,\delta}d(X_j,X_n)
\end{gather*}
for all $n\geq 0$ and $\delta>0$. Finally, $K$ denotes a kernel on $S$.

\subsection{Classical Anscombe's theorem and one of its developments}\label{classical}

Let $\mu$ be a Borel probability measure on $S$. According to AT, for $X_{N_n}$ to converge in distribution to $\mu$, it suffices that

\vspace{0.2cm}

\begin{itemize}

\item[(a)] $N_n/k_n\overset{P}\longrightarrow u$, where $u>0$ is a constant;

\item[(b)] $X_n$ converges in distribution to $\mu$;

\item[(c)] $\inf_{\delta>0}\,\limsup_n\,P\bigl(M_n(\delta)>\epsilon\bigr)=0\,\,$ for all $\epsilon>0$.

\end{itemize}

\vspace{0.2cm}

Soon after its appearance, AT has been investigated and developed in various ways. See e.g. \cite{GK}, \cite{GUT}, \cite{MNRT}, \cite{S}, \cite{ZHANG} and references therein. To our knowledge, most results preserve the structure of the classical AT, for they lead to convergence of $X_{N_n}$ (in distribution or stably) under suitable versions of conditions (a)-(b)-(c). In particular, much attention is paid to possible alternative versions of condition (c). Also, as remarked in Section \ref{intro}, condition (a) is often generalized into

\vspace{0.2cm}

\begin{itemize}

\item[(a*)] $N_n/k_n\overset{P}\longrightarrow U$, where $U>0$ is a random variable.

\end{itemize}

\vspace{0.2cm}

Replacing (a) with (a*) is not free but implies strengthening (b) and/or (c). A remarkable example is the following. In the sequel, $U$ denotes a real random variable and $\mathcal{G}$ a sub-$\sigma$-field of $\mathcal{A}$ such that
\begin{gather*}
U>0\quad\text{and}\quad\sigma(U)\subset\mathcal{G}.
\end{gather*}

\begin{thm}\label{n49v56} {\bf (Zhang Bo \cite{ZHANG}).} Let $U$ be strictly positive and $\mathcal{G}$-measurable. Suppose condition (a*) holds and

\vspace{0.2cm}

\begin{itemize}

\item[(b*)] $X_n$ converges $\mathcal{G}$-stably to $K$.

\end{itemize}

\vspace{0.2cm}

\noindent Then, $X_{N_n}$ converges $\mathcal{G}$-stably to $K$ provided condition (c) holds and $U$ is discrete. Or else, $X_{N_n}$ converges $\mathcal{G}$-stably to $K$ provided

\vspace{0.2cm}

\begin{itemize}

\item[(c*)] For each $\epsilon>0$, there is $\delta>0$ such that

\begin{gather*}
\limsup_n\,P\bigl(M_n(\delta)>\epsilon\mid H\bigr)<\epsilon\quad\text{for all }H\in\mathcal{G}\text{ with }P(H)>0.
\end{gather*}

\end{itemize}

\vspace{0.2cm}

\end{thm}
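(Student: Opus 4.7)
My plan is to reduce the problem to deterministic approximations and then rely on (b*). Stable convergence of $X_{N_n}$ to $K$ is equivalent to $E\{Z f(X_{N_n})\} \to E\{Z K(f)\}$ for every bounded $\mathcal{G}$-measurable $Z$ and every bounded Lipschitz $f : S \to \mathbb{R}$; I fix such $Z,f$ with $\|Z\|_\infty, \|f\|_\infty \leq 1$ and write $L$ for the Lipschitz constant of $f$. The key observation used throughout is that, since (b*) amounts to convergence of the real sequence $E\{Z f(X_n)\}$ to $E\{Z K(f)\}$, the same holds with $X_n$ replaced by $X_{m_n}$ for any deterministic $m_n \to \infty$, so every such $X_{m_n}$ converges $\mathcal{G}$-stably to $K$.

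\textbf{Discrete case.} Write $U = \sum_i u_i\,1_{A_i}$ with distinct $u_i > 0$ and $A_i = \{U = u_i\} \in \mathcal{G}$. Given $\epsilon > 0$, I choose $I$ with $\sum_{i > I} P(A_i) < \epsilon$ and apply (c) with $\epsilon/I$ in place of $\epsilon$ to pick $\delta > 0$ satisfying $\limsup_n P(M_n(\delta) > \epsilon/I) < \epsilon/I$. Setting $m_n^{(i)} = \lfloor u_i k_n \rfloor$, condition (a*) gives $P(A_i \cap \{|N_n - m_n^{(i)}| > \delta m_n^{(i)}\}) \to 0$ for each $i \leq I$, while on the complementary event $d(X_{N_n}, X_{m_n^{(i)}}) \leq M_{m_n^{(i)}}(\delta)$. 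The Lipschitz bound and the choice of $\delta$ then give $\limsup_n |E\{Z 1_{A_i}[f(X_{N_n}) - f(X_{m_n^{(i)}})]\}| \leq L(\epsilon/I)P(A_i) + 2(\epsilon/I)$. Combining with the stable limit $E\{Z 1_{A_i} f(X_{m_n^{(i)}})\} \to E\{Z 1_{A_i} K(f)\}$, summing over $i \leq I$, and controlling the tail $i > I$ by $O(\epsilon)$ yields $\limsup_n |E\{Zf(X_{N_n})\} - E\{ZK(f)\}| = O(\epsilon)$.

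\textbf{General case.} Now $U$ itself must be discretized. I truncate by picking $M$ with $P(B^c) < \epsilon$ for $B = \{1/M \leq U \leq M\} \in \mathcal{G}$, and apply (c*) to get $\delta > 0$ with $\limsup_n P(M_n(\delta) > \epsilon \mid H) < \epsilon$ for every $H \in \mathcal{G}$ with $P(H) > 0$. I then partition $[1/M, M]$ into finitely many half-open intervals of length $\eta < \delta/(2M)$, pick a representative $v_k$ in each, and set $A_k = \{U \in [v_k, v_k + \eta)\} \cap B \in \mathcal{G}$ and $m_n^{(k)} = \lfloor v_k k_n \rfloor$. Since $v_k \geq 1/(2M)$, on $A_k$ one has $|U/v_k - 1| \leq 2M\eta < \delta$, so (a*) again gives $P(A_k \cap \{|N_n - m_n^{(k)}| > \delta m_n^{(k)}\}) \to 0$. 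Proceeding exactly as before, the decisive step is to bound $P(A_k \cap \{M_{m_n^{(k)}}(\delta) > \epsilon\})$ by $P(A_k)\,P(M_{m_n^{(k)}}(\delta) > \epsilon \mid A_k)$ and invoke (c*), obtaining $\limsup_n \leq \epsilon\,P(A_k)$; these terms sum to $\leq \epsilon$ over the finitely many $k$, irrespective of the mesh. Coupled with stable convergence of each $X_{m_n^{(k)}}$ to $K$ and the truncation error $O(\epsilon)$, this again yields $\limsup_n |E\{Zf(X_{N_n})\} - E\{ZK(f)\}| = O(\epsilon)$, and letting $\epsilon \downarrow 0$ concludes.

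The main obstacle is exactly this last point: in the non-discrete setting the number of atoms $A_k$ needed to approximate $U$ grows without bound as $\eta \to 0$, so any unconditional control on $P(M_n(\delta) > \epsilon)$ would blow up when summed over atoms; only the conditional form (c*) supplies the weight $P(A_k)$ in each summand to keep the total bounded. This is precisely why the discrete case gets by with (c), while the general case requires the strictly stronger (c*).
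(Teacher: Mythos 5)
Your proof is correct, and it is a genuinely direct argument, whereas the paper never proves Theorem \ref{n49v56} head-on: it derives it as a corollary of Theorem \ref{rrpm} (which shows that (c) plus $U$ discrete, or (c*), implies condition (e)) combined with Theorem \ref{b56th9k} (which shows that (a*)-(b*)-(e) suffice). Despite the different packaging, the two arguments share the same engine. Your replacement of $N_n$ on each atom $A_k$ by the deterministic index $m_n^{(k)}=\lfloor v_k k_n\rfloor$, controlled via $M_{m_n^{(k)}}(\delta)$, is exactly the mechanism the paper uses with $R_n=[k_n U]$ and its discretization $R_n(\delta)=[k_n U_\delta]$; and your ``decisive step'' --- partitioning the truncated range of $U$ into intervals of small relative length and invoking (c*) conditionally on each piece so that the error $\epsilon\,P(A_k)$ sums to $\epsilon$ over an arbitrarily fine partition --- is precisely the partition $\{H_i\}$ argument in the proof of Theorem \ref{rrpm}, including your closing observation that this weighting is what (c*) buys over (c). The differences are cosmetic: you test against $Z f$ with $f$ bounded Lipschitz, while the paper uses a closed-set portmanteau bound with $C_\epsilon$-enlargements; and your route is shorter but proves only the stated theorem, whereas the paper's detour through condition (e) yields the strictly stronger Theorem \ref{b56th9k} (and the equivalence of (d) and (e)) along the way. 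The only points worth tightening in your write-up are small: on $A_k$ you get $|N_n/m_n^{(k)}-1|\le |N_n/m_n^{(k)}-U/v_k|+2M\eta$, so you should note that the strict gap $\delta-2M\eta>0$ is what makes $P(A_k\cap\{|N_n-m_n^{(k)}|>\delta m_n^{(k)}\})\to 0$; and you should say a word on why stable convergence may be tested with bounded Lipschitz $f$ and bounded $\mathcal{G}$-measurable $Z$ rather than continuous $f$ and indicators of $H\in\mathcal{G}$. Neither is a gap.
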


Theorem \ref{n49v56} is our starting point. Roughly speaking, it can be summarized as follows. Suppose (a*) and (c) hold but (a) fails. If $U$ is discrete, $X_{N_n}$ still converges in distribution (in fact, it converges stably) up to replacing (b) with (b*). If $U$ is not discrete, instead, condition (c) should be strengthened as well.

\subsection{Improving Theorem \ref{n49v56}}

Suppose conditions (a*)-(b*) hold but $U$ is not necessarily discrete. As implicit in Theorem \ref{n49v56}, it may be that (c) holds and yet $X_{N_n}$ fails to converge $\mathcal{G}$-stably to $K$; see Example \ref{marinatel}. Hence, to get $X_{N_n}\overset{\mathcal{G}-stably}\longrightarrow K$, condition (c) is to be modified. Plainly, a number of conditions could serve to this purpose. We now investigate two of them.

One (crude) possibility is just replacing $n$ with $N_n$ in condition (c), that is,

\vspace{0.2cm}

\begin{itemize}

\item[(d)] $\inf_{\delta>0}\,\limsup_n\,P\bigl(M_{N_n}(\delta)>\epsilon\bigr)=0\,\,$ for all $\epsilon>0$,

\end{itemize}

\vspace{0.2cm}

\noindent where

\begin{gather*}
M_{N_n}(\delta)=\max_{j:\abs{N_n-j}\leq N_n\,\delta}d(X_j,X_{N_n}).
\end{gather*}

\noindent Unlike condition (c*) of Theorem \ref{n49v56}, which works for {\em every} sequence $N_n$ (as far as (a*) and (b*) are satisfied), condition (d) is tailor-made on the particular sequence of random times at hand.

In view of (a*), another option is replacing $M_n(\delta)$ with
\begin{gather*}
M_{[k_n\,U]}(\delta)=\max_{j:\abs{[k_n\,U]-j}\leq [k_n\,U]\,\delta}d(X_j,X_{[k_n\,U]}).
\end{gather*}
The corresponding condition is

\vspace{0.2cm}

\begin{itemize}

\item[(e)] $\inf_{\delta>0}\,\limsup_n\,P\bigl(M_{[k_n\,U]}(\delta)>\epsilon\bigr)=0\,\,$ for all $\epsilon>0$.

\end{itemize}

\vspace{0.2cm}

Conditions (d) and (e) are actually equivalent. More importantly, they lead to the desired conclusion.

\begin{thm}\label{b56th9k} Let $U$ be strictly positive and $\mathcal{G}$-measurable. Conditions (d) and (e) are equivalent under (a*). Moreover,
\begin{gather*}
X_{N_n}\overset{\mathcal{G}-stably}\longrightarrow K\quad\text{and}\quad X_{[k_n\,U]}\overset{\mathcal{G}-stably}\longrightarrow K
\end{gather*}
under conditions (a*)-(b*)-(d) (or equivalently (a*)-(b*)-(e)).
\end{thm}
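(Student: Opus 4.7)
The plan is to reduce the theorem to two intermediate facts: that $X_{[k_n\,U]}$ converges $\mathcal{G}$-stably to $K$, and that $d(X_{N_n},X_{[k_n\,U]})\overset{P}\longrightarrow 0$. Both the equivalence (d)$\Leftrightarrow$(e) under (a*) and the transfer of convergence from $X_{[k_n\,U]}$ to $X_{N_n}$ rest on a common elementary observation: since $N_n/k_n\overset{P}\longrightarrow U$ and $[k_n\,U]/k_n\to U$ a.s.\ with $U>0$, one has $N_n/[k_n\,U]\overset{P}\longrightarrow 1$, so for every $\eta>0$ the event $\{\abs{N_n-[k_n\,U]}\leq\eta\,[k_n\,U]\}$ has probability tending to $1$. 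On such an event, a $\delta$-window around $N_n$ is contained in a $\delta'$-window around $[k_n\,U]$ (and vice versa) for $\delta'$ slightly larger than $\delta+2\eta$, which yields $M_{N_n}(\delta)\leq 2\,M_{[k_n\,U]}(\delta')$ and the symmetric bound, hence (d)$\Leftrightarrow$(e); the same event gives $d(X_{N_n},X_{[k_n\,U]})\leq M_{[k_n\,U]}(\eta)$, so (a*)+(e) force $d(X_{N_n},X_{[k_n\,U]})\overset{P}\longrightarrow 0$.

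The heart of the proof is therefore establishing $X_{[k_n\,U]}\overset{\mathcal{G}-stably}\longrightarrow K$. I would first handle simple $\mathcal{G}$-measurable $U=\sum_k a_k\,I_{A_k}$ with $a_k>0$ and $A_k\in\mathcal{G}$: on $A_k$ one has $X_{[k_n\,U]}=X_{[k_n\,a_k]}$ with $[k_n\,a_k]\to\infty$ a deterministic subsequence, so for every $H\in\mathcal{G}$ and bounded continuous $f$ the decomposition $E\{f(X_{[k_n\,U]})\,I_H\}=\sum_k E\{f(X_{[k_n\,a_k]})\,I_{H\cap A_k}\}$ together with (b*) applied to each deterministic subsequence yields the limit $E\{K(f)\,I_H\}$. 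For general $U>0$, I would approximate $U$ by a simple $\mathcal{G}$-measurable $U_\eta$ coinciding with $U$ up to $\eta$ on $B_M=\{1/M\leq U\leq M\}$ and equal to $1$ elsewhere. Then on $B_M$ one has $\abs{[k_n\,U]-[k_n\,U_\eta]}\leq k_n\eta+1\leq\delta\,[k_n\,U]$ for large $n$ and $\delta$ slightly larger than $M\eta$, so the window argument above gives $d(X_{[k_n\,U]},X_{[k_n\,U_\eta]})\leq M_{[k_n\,U]}(\delta)$. Fixing $\epsilon'>0$, choosing $M$ so that $P(B_M^c)<\epsilon'$, then $\delta_0$ supplied by (e) with $\limsup_n P(M_{[k_n\,U]}(\delta_0)>\epsilon)<\epsilon'$, and finally $\eta$ so small that $\delta=2M\eta\leq\delta_0$, controls $P(d(X_{[k_n\,U]},X_{[k_n\,U_\eta]})>\epsilon)$ uniformly in large $n$. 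Testing against bounded Lipschitz functions and combining with the simple case applied to $U_\eta$ then gives $\mathcal{G}$-stable convergence for general $U$.

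The conclusion for $X_{N_n}$ follows immediately: once $X_{[k_n\,U]}\overset{\mathcal{G}-stably}\longrightarrow K$ and $d(X_{N_n},X_{[k_n\,U]})\overset{P}\longrightarrow 0$, the stable convergence of $X_{N_n}$ to $K$ is a consequence of the elementary fact that, for a separable metric $S$ and bounded Lipschitz test functions (which determine $\mathcal{G}$-stable convergence since they determine weak convergence), modifying the sequence by a perturbation converging to $0$ in probability preserves the limit. The main technical obstacle I foresee is precisely the approximation step for general $U$: one has to interleave the choices of the truncation level $M$, the discretization mesh $\eta$, and the parameter $\delta$ supplied by (e) in the correct order, so that the composite error in bounding $E\{f(X_{[k_n\,U]})\,I_H\}-E\{f(X_{[k_n\,U_\eta]})\,I_H\}$ can be made arbitrarily small while the simple case is still applicable to each fixed $U_\eta$.
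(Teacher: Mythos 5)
Your proposal is correct and follows essentially the same route as the paper's proof: the window argument on the event $\{\abs{N_n-[k_n\,U]}\leq\eta\,[k_n\,U]\}$ for the equivalence of (d) and (e) and for $d(X_{N_n},X_{[k_n\,U]})\overset{P}\longrightarrow 0$, the reduction to discrete $\mathcal{G}$-measurable approximations of $U$ handled atomwise via (b*), and the control of the approximation error by $M_{[k_n\,U]}(\delta)$ through condition (e). The only differences are cosmetic (truncation to $B_M$ plus mesh $\eta$ instead of the paper's $U_{\delta^2}$, and bounded Lipschitz test functions instead of the closed-set portmanteau argument).
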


\begin{proof} Let $R_n=[k_n\,U]$. We first show that (d) and (e) are equivalent under (a*).

Suppose (a*) and (e) hold and fix $\delta\in (0,1]$. If $\abs{R_n-N_n}\leq\delta\,R_n$ and $j$ is such that $\abs{j-N_n}\leq\delta\,N_n$, then
\begin{gather*}
\abs{j-R_n}\leq\abs{j-N_n}+\delta\,R_n\leq\delta\,N_n+\delta\,R_n\leq 2\,\delta\,R_n+\delta\,\abs{R_n-N_n}\leq 3\,\delta\,R_n.
\end{gather*}
Hence, $\abs{R_n-N_n}\leq\delta\,R_n$ implies
\begin{gather*}
M_{N_n}(\delta)\leq d(X_{R_n},X_{N_n})+\max_{j:\abs{j-R_n}\leq 3\delta\,R_n}d(X_j,X_{R_n})\leq 2\,M_{R_n}(3\,\delta).
\end{gather*}
Given $\epsilon>0$, it follows that
\begin{gather*}
P\bigl(M_{N_n}(\delta)>\epsilon\bigr)\leq P\bigl(\abs{R_n-N_n}>\delta\,R_n\bigr)+P\bigl(M_{R_n}(3\,\delta)>\epsilon/2\bigr).
\end{gather*}
By (a*), $N_n/R_n\overset{P}\longrightarrow 1$ so that $\lim_nP\bigl(\abs{R_n-N_n}>\delta\,R_n\bigr)=0$. Therefore,
\begin{gather*}
\limsup_nP\bigl(M_{N_n}(\delta)>\epsilon\bigr)\leq\limsup_nP\bigl(M_{R_n}(3\,\delta)>\epsilon/2\bigr)
\end{gather*}
and condition (d) follows from condition (e). By precisely the same argument, it can be shown that (a*) and (d) imply (e).

Next, assume conditions (a*)-(b*)-(e). Since
\begin{gather*}
d(X_{R_n},X_{N_n})\leq M_{R_n}(\delta)\quad\text{provided }\,\abs{R_n-N_n}\leq\delta\,R_n,
\end{gather*}
conditions (a*) and (e) yield $d(X_{R_n},X_{N_n})\overset{P}\longrightarrow 0$. Thus, it suffices to prove that $X_{R_n}\overset{\mathcal{G}-stably}\longrightarrow K$. To this end, for each $\delta\in (0,1]$, define
\begin{equation*}
U_\delta=\delta\,I_{\{0<U\leq \delta\}}+\sum_{j=1}^\infty j\,\delta\,I_{\{j\,\delta<U\leq (j+1)\,\delta\}}\quad\text{and}\quad R_n(\delta)=[k_n\,U_\delta].
\end{equation*}

Since $U_\delta$ is discrete, strictly positive and $\mathcal{G}$-measurable, condition (b*) yields $X_{R_n(\delta)}\overset{\mathcal{G}-stably}\longrightarrow K$. Fix in fact $H\in\mathcal{G}$ with $P(H)>0$ and let $H_j=H\cap\{U_\delta=j\,\delta\}$ for all $j\geq 1$. Then, (b*) implies
\begin{gather*}
\lim_nE\bigl\{f(X_{R_n(\delta)})\mid H\bigr\}=\lim_n\sum_jE\bigl\{f(X_{[k_n\,j\,\delta]})\mid H_j\bigr\}\,P(H_j\mid H)
\\=\sum_jE\bigl\{K(f)\mid H_j\bigr\}\,P(H_j\mid H)=E\bigl\{K(f)\mid H\bigr\}
\end{gather*}
for each bounded continuous $f$, where the sum is over those $j$ such that $P(H_j)>0$.

Note also that, on the set $\{U>\delta\}$, one obtains
\begin{gather*}
\abs{R_n-R_n({\delta^2})}=R_n-R_n({\delta^2})=R_n\,\frac{[k_n\,U]-[k_n\,U_{\delta^2}]}{[k_n\,U]}
\\< R_n\,\frac{k_n\,(U-U_{\delta^2})+1}{k_n\,U-1}< R_n\,\frac{k_n\,\delta^2+1}{k_n\,\delta-1}< 2\,\delta\,R_n\quad\text{for large }n.
\end{gather*}
Thus, for $\epsilon>0$ and large $n$,
\begin{gather*}
P\Bigl(\,d(X_{R_n}\,,\,X_{R_n({\delta^2})}\,)>\epsilon\Bigr)
\leq P(U\leq\delta)+P\bigl(M_{R_n}(2\,\delta)>\epsilon\bigr).
\end{gather*}
By condition (e) and since $U>0$, it follows that
\begin{gather}\label{pm7y}
\inf_{\delta>0}\limsup_nP\Bigl(\,d(X_{R_n}\,,\,X_{R_n({\delta^2})}\,)>\epsilon\Bigr)=0.
\end{gather}

Finally, fix $\epsilon>0$, $H\in\mathcal{G}$ with $P(H)>0$, and a closed set $C\subset S$. Let $C_\epsilon=\{x\in S:d(x,C)\leq\epsilon\}$. By \eqref{pm7y}, there is $\delta\in (0,1]$ such that
\begin{gather*}
\limsup_nP\Bigl(\,d(X_{R_n}\,,\,X_{R_n({\delta^2})}\,)>\epsilon\Bigr)<\epsilon\,P(H).
\end{gather*}
With such a $\delta$, since $X_{R_n(\delta^2)}\overset{\mathcal{G}-stably}\longrightarrow K$, one obtains
\begin{gather*}
\limsup_nP\bigl(X_{R_n}\in C\mid H\bigr)\leq\limsup_n\Bigl\{P\Bigl(\,d(X_{R_n}\,,\,X_{R_n({\delta^2})}\,)>\epsilon\mid H\Bigr)+P\Bigl(X_{R_n(\delta^2)}\in C_\epsilon\mid H\Bigr)\Bigr\}
\\<\epsilon+\limsup_nP\Bigl(X_{R_n(\delta^2)}\in C_\epsilon\mid H\Bigr)\leq\epsilon+E\bigl\{K(C_\epsilon)\mid H\bigr\}.
\end{gather*}
As $\epsilon\rightarrow 0$, it follows that $\limsup_nP\bigl(X_{R_n}\in C\mid H\bigr)\leq E\bigl\{K(C)\mid H\bigr\}$. Therefore, $X_{R_n}\overset{\mathcal{G}-stably}\longrightarrow K$ and this concludes the proof.

\end{proof}

Theorem \ref{b56th9k} unifies the two parts of Theorem \ref{n49v56} ($U$ discrete and $U$ not discrete). In addition, Theorem \ref{b56th9k} strictly improves Theorem \ref{n49v56}. In fact, condition (c*) implies condition (e) but not conversely. Two (natural) examples where (e) holds and (c*) fails are given in the next section; see Examples \ref{birgi} and \ref{birgikjh}. Here, we prove the direct implication.

\begin{thm}\label{rrpm}
Let $U$ be strictly positive and $\mathcal{G}$-measurable. If condition (c) holds and $U$ is discrete, or if condition (c*) holds, then condition (e) holds.
\end{thm}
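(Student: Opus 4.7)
The plan is to treat the two hypotheses separately, producing in each case, for arbitrary $\epsilon,\eta>0$, a single $\delta>0$ with $\limsup_n P\bigl(M_{[k_n U]}(\delta)>\epsilon\bigr)<\eta$, which is precisely condition (e).

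When $U$ is discrete and (c) holds, I would write $U=\sum_{j\geq 1}u_j\,I_{A_j}$ with $u_j>0$ and $A_j\in\mathcal{G}$ pairwise disjoint, pick an integer $J$ with $P\bigl(\bigcup_{j>J}A_j\bigr)<\eta/2$, and use (c) to find $\delta>0$ with $\limsup_n P(M_n(\delta)>\epsilon)<\eta/(2J)$. Since $[k_n U]=[k_n u_j]$ on $A_j$, the inequality $P(M_{[k_n U]}(\delta)>\epsilon)\leq\sum_{j=1}^{J}P(M_{[k_n u_j]}(\delta)>\epsilon)+\eta/2$, combined with the fact that each $[k_n u_j]\to\infty$ and (c) applies to every such subsequence, yields the desired bound.

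When (c*) holds, my plan is to imitate the discrete-approximation argument of the proof of Theorem \ref{b56th9k}. Applying (c*) at level $\min(\epsilon/2,\eta/2)$ I would extract a single $\delta_*>0$ such that
\begin{equation*}
\limsup_n P\bigl(M_n(\delta_*)>\epsilon/2\mid H\bigr)<\eta/2\quad\text{for every }H\in\mathcal{G}\text{ with }P(H)>0.
\end{equation*}
Then I would choose $\beta_0\in(0,\delta_*/6]$ small enough that $P(U\leq\beta_0)<\eta/4$, set $V:=U_{\beta_0^2}$ (the strictly positive, discrete, $\mathcal{G}$-measurable step function already used in the proof of Theorem \ref{b56th9k}), and write $R_n:=[k_n U]$, $W_n:=[k_n V]$.

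The heart of the argument is the pathwise bound $M_{R_n}(\beta_0)\leq 2\,M_{W_n}(\delta_*)$ on $\{U>\beta_0\}$ for all large $n$. This follows from the relative-error estimate $|R_n-W_n|\leq 2\beta_0\,R_n$ derived inside the proof of Theorem \ref{b56th9k}: it forces $R_n\leq 2W_n$, so any $j$ with $|j-R_n|\leq\beta_0 R_n$ satisfies $|j-W_n|\leq 6\beta_0 W_n\leq\delta_* W_n$, and the triangle inequality finishes. Taking probabilities,
\begin{equation*}
P\bigl(M_{R_n}(\beta_0)>\epsilon\bigr)\leq P(U\leq\beta_0)+o(1)+P\bigl(M_{W_n}(\delta_*)>\epsilon/2\bigr),
\end{equation*}
and I would control the last probability by decomposing it over the countably many atoms $\{V=v_j\}\in\mathcal{G}$ (on which $W_n=[k_n v_j]$), applying (c*) conditionally on each atom, and combining via reverse Fatou with the summable dominant $P(V=v_j)$. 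This bounds its $\limsup$ by $\eta/2$, so $\limsup_n P(M_{R_n}(\beta_0)>\epsilon)<\eta$. The main obstacle I anticipate is the mutual calibration of scales: the approximation must be one order finer than the cutoff (so $V=U_{\beta_0^2}$, not $U_{\beta_0}$) to push the relative error $|R_n-W_n|/R_n$ down to $O(\beta_0)$ and fit inside the fixed $\delta_*$ delivered by (c*).
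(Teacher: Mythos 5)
Your proposal is correct and follows essentially the same strategy as the paper's proof: in both cases one discretizes $U$, controls the relative error between $[k_nU]$ and the index obtained from the discretized variable so that $M_{[k_nU]}(\cdot)$ is dominated by twice the corresponding maximum at a deterministic index, and then applies (c) or (c*) conditionally on the pieces of the partition (the paper uses a finite partition of $\{a\leq U<b\}$ into intervals of width $2\gamma$ with $\gamma/u_i\leq\delta/(8+\delta)$, while you reuse the $U_{\delta^2}$ step function from the proof of Theorem \ref{b56th9k}; the two calibrations play the same role). Your explicit truncation to finitely many atoms in the discrete case, and the reverse-Fatou argument over the countably many atoms of $U_{\beta_0^2}$ in the (c*) case, correctly handle a summation step that the paper leaves largely implicit.
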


\begin{proof}
Let $R_n=[k_n\,U]$. Suppose (c) holds and $U$ is discrete. Then it suffices to note that, for each $\epsilon>0$ and $u>0$ such that $P(U=u)>0$, one obtains
\begin{gather*}
\limsup_n\,P\bigl(M_{R_n}(\delta)>\epsilon\mid U=u\bigr)
=\limsup_n\,P\bigl(M_{[k_n\,u]}(\delta)>\epsilon\mid U=u\bigr)
\\\leq P(U=u)^{-1}\,\limsup_nP\bigl(M_n(\delta)>\epsilon\bigr)\longrightarrow 0\quad\text{as }\delta\rightarrow 0.
\end{gather*}
Next, suppose (c*) holds. Given $\epsilon>0$, take $\delta>0$ such that
\begin{gather*}
\limsup_n\,P\bigl(M_n(\delta)>\epsilon/2\mid H\bigr)<\epsilon/2\quad\text{for all }H\in\mathcal{G}\text{ with }P(H)>0.
\end{gather*}
Fix $u,\,\gamma >0$ and define $H=\{u-\gamma\leq U<u+\gamma\}$. Take $j$ and $n$ such that
\begin{gather*}
\abs{j-R_n}\leq (\delta/4)\,R_n,\quad k_n\,\gamma>1,\quad k_n\,u<2\,[k_n\,u].
\end{gather*}
On the set $H$, one obtains
\begin{gather*}
\abs{j-[k_n\,u]}\leq\abs{j-R_n}+\abs{R_n-[k_n\,u]}\leq (\delta/4)\,R_n+\abs{[k_n\,U]-[k_n\,u]}
\\<(\delta/4)\,k_n\,(u+\gamma)+k_n\,\gamma+1<[k_n\,u]\,\frac{2}{u}\,\{(\delta/4)\,(u+\gamma)+2\,\gamma\}.
\end{gather*}
Letting $\delta^*=(2/u)\,\bigl\{(\delta/4)\,(u+\gamma)+2\,\gamma\bigr\}$, it follows that
\begin{equation*}
M_{R_n}(\delta/4)\leq M_{[k_n\,u]}(\delta^*)+d\bigl(X_{R_n}\,,\,X_{[k_n\,u]}\bigr)\leq 2\,M_{[k_n\,u]}(\delta^*)
\end{equation*}
on $H$ for large $n$. Since $H\in\mathcal{G}$,
\begin{gather*}
\limsup_nP\bigl(M_{R_n}(\delta/4)>\epsilon\mid H\bigr)\leq\limsup_nP\bigl(M_{[k_n\,u]}(\delta^*)>\epsilon/2\mid H\bigr)
\\\leq\limsup_nP\bigl(M_n(\delta^*)>\epsilon/2\mid H\bigr)<\epsilon/2
\end{gather*}
provided $P(H)>0$ and $u,\,\gamma$ are such that $\delta^*\leq\delta$, or equivalently
\begin{equation*}
\frac{\gamma}{u}\leq\frac{\delta}{8+\delta}.
\end{equation*}
Finally, take $0<a<b$ such that $P(a\leq U<b)>1-(\epsilon/2)$. The set $\{a\leq U<b\}$ can be partitioned into sets $H_i=\{u_i-\gamma\leq U<u_i+\gamma\}$ such that $(\gamma/a)\leq\delta/(8+\delta)$ and $u_1=a+\gamma<u_2<\ldots$. On noting that $(\gamma/u_i)\leq\delta/(8+\delta)$ for all $i$,
\begin{gather*}
\limsup_nP\bigl(M_{R_n}(\delta/4)>\epsilon\bigr)<\epsilon/2+\limsup_nP\bigl(M_{R_n}(\delta/4)>\epsilon,\,a\leq U<b\bigr)
\\\leq\epsilon/2+\sum_i\limsup_nP\bigl(M_{R_n}(\delta/4)>\epsilon\mid H_i\bigr)\,P(H_i)<\epsilon
\end{gather*}
where the sum is over those $i$ with $P(H_i)>0$. This concludes the proof.
\end{proof}

\section{Examples}

It is implicit in Theorem \ref{n49v56} that, when $U$ is not discrete, conditions (a*)-(b*)-(c) are not enough for $X_{N_n}\overset{\mathcal{G}-stably}\longrightarrow K$ (where $K$ is the kernel involved in condition (b*)). However, we do not know of any explicit example. So, we begin with one such example.

\begin{ex} {\bf(Conditions (a*)-(b*)-(c) do not imply $X_{N_n}\overset{\mathcal{G}-stably}\longrightarrow K$).}\label{marinatel}
Let $\Omega=[0,1)$, $\mathcal{A}$ the Borel $\sigma$-field and $P$ the Lebesgue measure. For each $n\geq 1$, define
 \begin{gather*}
A_n=\bigl[\log n,\,\log (n+1)\bigr)\,\,\text{ modulo }1,
\end{gather*}
that is, $A_1=[0,\,\log 2)$, $A_2=[\log 2,\,1)\cup [0,\,(\log 3)-1)$ and so on. Define also $X_0=0$ and $X_n=I_{A_n}$ for $n\geq 1$. Since $P(A_n)=\log ((n+1)/n)$, then $X_n\overset{P}\longrightarrow 0$, or equivalently $X_n$ converges $\mathcal{A}$-stably to the point mass at 0 (see Section \ref{stable}). Thus, condition (b*) holds with $\mathcal{G}=\mathcal{A}$ and $K$ the point mass at 0. Given $\epsilon>0$,
\begin{gather*}
P\bigl(M_n(\delta)>\epsilon,\,X_n=0\bigr)\leq P\Bigl(\bigcup_{j:\abs{n-j}\leq n\,\delta}A_j\Bigr)
\leq \sum_{j:\abs{n-j}\leq n\,\delta}P(A_j)\leq\log\frac{[n\,(1+\delta)]+1}{[n\,(1-\delta)]}.
\end{gather*}
Since $P(X_n=0)\rightarrow 1$, it follows that
\begin{gather*}
\limsup_nP\bigl(M_n(\delta)>\epsilon\bigr)=\limsup_nP\bigl(M_n(\delta)>\epsilon,\,X_n=0\bigr)\leq\log\frac{1+\delta}{1-\delta},
\end{gather*}
that is, condition (c) holds. Finally, define $U(\omega)=\exp{(\omega)}$ for all $\omega\in [0,1)$ and
\begin{gather*}
N_n=[U\,\exp{(r_n)}],
\end{gather*}
where the $r_n$ are non-negative integers such that $r_n\rightarrow\infty$. Condition (a*) is trivially true. Further, for each $n$, one obtains $\{N_n=k\}\subset A_k$ for all $k$, so that $X_{N_n}=1$. Thus, $X_{N_n}$ fails to converge $\mathcal{A}$-stably to the point mass at 0.
\end{ex}

We next prove that condition (e) does not imply condition (c*). We give two examples. The first is just a modification of Example \ref{marinatel}, while the second (which requires some more calculations) concerns the exchangeable CLT. Recall that (d) and (e) are equivalent under (a*).

\begin{ex} {\bf (Example \ref{marinatel} revisited).}\label{birgi}
Conditions (b*)-(c)-(c*) depend on $(X_n)$ and $\mathcal{G}$ only. In view of Theorem \ref{n49v56}, condition (c*) fails in Example \ref{marinatel}. Hence, to build an example where (c*) fails but (a*)-(b*)-(c)-(d) hold, it suffices to suitably modify the random times $N_n$ of Example \ref{marinatel}. Precisely, suppose $(\Omega,\mathcal{A},P)$, $U$, $(X_n)$ and $\mathcal{G}$ are as in Example \ref{marinatel}, but the random times are now
\begin{gather*}
N_n=\Bigl[\,\frac{T_{n-1}+T_n}{2}\,\Bigr]\quad\text{where }\,T_n=\inf\{j:j>T_{n-1}\text{ and }X_j=1\}\text{ and }N_0=T_0=0.
\end{gather*}
Then, (c*) fails while (b*)-(c) hold. It is not hard to see that $T_n=[\exp{(n-1)}\,U]$ for $n\geq 1$. Thus, conditions (a*) and (d) are both trivially true. (As to (d), just note that $T_{n-1}<N_n\,(1-\delta)<N_n\,(1+\delta)<T_n$ for large $n$ and small $\delta$).
\end{ex}

\begin{ex} {\bf (Exchangeable CLT).}\label{birgikjh}
Let $(Z_n:n\geq 1)$ be an exchangeable sequence of real random variables with tail $\sigma$-field $\mathcal{T}$. By de Finetti's theorem, $(Z_n)$ is i.i.d. conditionally on $\mathcal{T}$. Basing on this fact, if $E(Z_1^2)<\infty$, it is not hard to see that
\begin{gather*}
\frac{\sum_{i=1}^n\{Z_i-E(Z_1\mid\mathcal{T})\}}{\sqrt{n}}\overset{\mathcal{A}-stably}\longrightarrow N(0,L)
\end{gather*}
where $L=E(Z_1^2\mid\mathcal{T})-E(Z_1\mid\mathcal{T})^2$ and $N(0,\sigma^2)$ denotes the Gaussian law with mean 0 and variance $\sigma^2$ (with $N(0,0)$ the point mass at 0); see e.g. Theorem 3.1 of \cite{BPR04} and the subsequent remark. Fix a $\mathcal{T}$-measurable random variable $U>0$ and define
\begin{gather*}
N_n=[n\,U],\quad X_0=0,\quad X_n=\frac{\sum_{i=1}^n\{Z_i-E(Z_1\mid\mathcal{T})\}}{\sqrt{n}}.
\end{gather*}
Then, conditions (a*)-(b*)-(c)-(d) are satisfied (with $\mathcal{G}=\mathcal{A}$ and $K=N(0,L)$) so that
\begin{gather*}
\frac{\sum_{i=1}^{N_n}\{Z_i-E(Z_1\mid\mathcal{T})\}}{\sqrt{N_n}}\overset{\mathcal{A}-stably}\longrightarrow N(0,L)
\end{gather*}
because of Theorem \ref{b56th9k}. Indeed, (a*)-(b*) are obvious and (c) can be checked precisely as (d). As to (d), given $\epsilon>0$, just note that
\begin{gather*}
\limsup_n\,P\bigl(M_{N_n}(\delta)>\epsilon\mid\mathcal{T}\bigr)\leq\limsup_n\,P\bigl(M_n(\delta)>\epsilon\mid\mathcal{T}\bigr)\quad\text{a.s.}
\end{gather*}
for $N_n$ is $\mathcal{T}$-measurable, and
\begin{gather*}
\limsup_n\,P\bigl(M_n(\delta)>\epsilon\mid\mathcal{T}\bigr)\overset{a.s.}\longrightarrow 0\quad\text{as }\delta\rightarrow 0
\end{gather*}
for $(Z_n)$ is i.i.d. conditionally on $\mathcal{T}$. Thus,
\begin{gather*}
\limsup_n\,P\bigl(M_{N_n}(\delta)>\epsilon\bigr)\leq\int\limsup_n\,P\bigl(M_{N_n}(\delta)>\epsilon\mid\mathcal{T}\bigr)\,dP
\\\leq\int\limsup_n\,P\bigl(M_n(\delta)>\epsilon\mid\mathcal{T}\bigr)\,dP\longrightarrow 0\quad\text{as }\delta\rightarrow 0.
\end{gather*}

It remains to see that condition (c*) may fail. We verify this fact for
\begin{gather*}
\mathcal{G}=\sigma(U)\quad\text{and}\quad Z_n=U\,V_n
\end{gather*}
where
\begin{itemize}

\item $U$ is any random variable such that $U>0$, $E(U^2)<\infty$ and $P(U>u)>0$ for all $u>0$;

\item $(V_n)$ is i.i.d., $V_1\sim N(0,1)$, and $(V_n)$ is independent of $U$.

\end{itemize}
Such a sequence $(Z_n)$ is exchangeable and $E(Z_1^2)=E(U^2)<\infty$. Furthermore, $E(Z_1\mid\mathcal{T})=0$ a.s. and $U$ is $\mathcal{T}$-measurable (up to modifications on $P$-null sets) for
\begin{gather*}
\frac{\sum_{i=1}^nZ_i}{n}=U\,\frac{\sum_{i=1}^nV_i}{n}\overset{a.s.}\longrightarrow 0\quad\text{and}\quad
\frac{\sum_{i=1}^nZ_i^2}{n}=U^2\,\frac{\sum_{i=1}^nV_i^2}{n}\overset{a.s.}\longrightarrow U^2.
\end{gather*}
Next, a direct calculation shows that
\begin{gather*}
\frac{\sum_{i=1}^nV_i}{\sqrt{n}}-\frac{\sum_{i=1}^mV_i}{\sqrt{m}}\sim N\bigl(0,\,2-2\,\sqrt{n/m}\,\bigr)\quad\text{for }1\leq n\leq m.
\end{gather*}
Thus, conditionally on $U$,
\begin{gather*}
X_n-X_{[n\,(1-\delta)]}=U\,\Bigl\{\frac{\sum_{i=1}^nV_i}{\sqrt{n}}-\frac{\sum_{i=1}^{[n\,(1-\delta)]}V_i}{\sqrt{[n\,(1-\delta)]}}\,\Bigr\}\sim N\bigl(0,\,U^2\,\sigma^2_n(\delta)\bigr)
\end{gather*}
where $\delta\in (0,1)$ and
\begin{gather*}
\sigma^2_n(\delta)=2-2\,\sqrt{\frac{[n\,(1-\delta)]}{n}}\geq 2-2\,\sqrt{1-\delta}.
\end{gather*}
Define $H=\{U>u\}$ and $f(\delta)=2\,\sqrt{2-2\,\sqrt{1-\delta}}$ for some $u>0$ and $\delta\in (0,\,1/2)$. Letting $\Phi$ denote the standard normal distribution function, for each $n$ such that $n-[n\,(1-\delta)]\leq n\,2\,\delta$, one obtains
\begin{gather*}
P\bigl(M_n(2\,\delta)>1/2\mid H\bigr)\geq P\Bigl(\abs{X_n-X_{[n\,(1-\delta)]}}>1/2\mid H\Bigr)
\\=P(H)^{-1}\,\int_H P\Bigl(\abs{X_n-X_{[n\,(1-\delta)]}}>1/2\mid U\Bigr)\,dP
\\=P(H)^{-1}\,\int_H 2\,\Phi\bigl(-\frac{1}{2\,U\,\sigma_n(\delta)}\bigr)\,dP
\\\geq 2\,P(H)^{-1}\,\int_H \Phi\bigl(-\frac{1}{U\,f(\delta)}\bigr)\,dP\geq 2\,\Phi\bigl(-\frac{1}{u\,f(\delta)}\bigr).
\end{gather*}
Since $P(U>u)>0$ for all $u>0$, condition (c*) (applied with $\epsilon=1/2$) would imply $\Phi\bigl(-\frac{1}{u\,f(\delta)}\bigr)<1/4$ for some fixed $\delta$ and all $u>0$. But this is absurd for $\lim_{u\rightarrow\infty}\Phi\bigl(-\frac{1}{u\,f(\delta)}\bigr)=\Phi(0)=1/2$. Therefore, (c*) fails in this example.
\end{ex}

Our last example deals with empirical processes for non independent data. Let $l^\infty(\mathbb{R})$ denote the space of real bounded functions on $\mathbb{R}$ equipped with uniform distance.

\begin{ex} {\bf (Exchangeable empirical processes).}\label{costolina} Again, let $(Z_n:n\geq 1)$ be an exchangeable sequence of real random variables with tail $\sigma$-field $\mathcal{T}$. Let $F$ be a random distribution function satisfying
\begin{gather*}
F(t)=P(Z_1\leq t\mid\mathcal{T})\quad\text{a.s. for all }t\in\mathbb{R}.
\end{gather*}
The $n$-th empirical process can be defined as
\begin{gather*}
X_n(t)=\sqrt{n}\,\Bigl\{(1/n)\,\sum_{i=1}^nI_{\{Z_i\leq t\}}-F(t)\Bigr\}\quad\text{for }t\in\mathbb{R}.
\end{gather*}
Define also the process $X(t)=\mathbb{B}\bigl(F(t)\bigr)$, $t\in\mathbb{R}$, where $\mathbb{B}$ is a Brownian-bridge process independent of $F$. (Such a $\mathbb{B}$ is available up to enlarging the basic probability space $(\Omega,\mathcal{A},P)$). If $P(Z_1=Z_2)=0$ or if $Z_1$ is discrete, then $X_n\overset{d}\longrightarrow X$ in the metric space $l^\infty(\mathbb{R})$; see \cite{BPR04}-\cite{BPR06} for details. But $l^\infty(\mathbb{R})$ is not separable and working with it yields various measurability issues. So, to avoid technicalities, we assume $0\leq Z_1\leq 1$ and we take $S$ to be the space of real cadlag functions on $[0,1]$ equipped with Skorohod distance. Then, $X_n\overset{d}\longrightarrow X$ in the separable metric space $S$; see e.g. Theorem 3 of \cite{BPR06}. Actually, basing on de Finetti's theorem, it can be shown that $X_n$ converges $\mathcal{A}$-stably to a certain kernel $K$ on $S$. Precisely, for each distribution function $H$, let $Q_H$ denote the probability distribution (on the Borel sets of $S$) of the process $X_H(t)=\mathbb{B}\bigl(H(t)\bigr)$, $t\in [0,1]$. Then, $K$ can be written as
\begin{gather*}
K(A)=Q_F(A)\quad\text{for all Borel sets }A\subset S.
\end{gather*}
Finally, let $N_n=[n\,U]$ where $U>0$ is any $\mathcal{T}$-measurable random variable. Then, condition (a*) is trivially true, (b*) holds with $\mathcal{G}=\mathcal{A}$, and (d) can be checked as in Example \ref{birgikjh}. Thus, Theorem \ref{b56th9k} implies $X_{N_n}\overset{\mathcal{A}-stably}\longrightarrow K$. This fact can not be deduced by Theorem \ref{n49v56}, however, for condition (c*) may fail.
\end{ex}

\end{document}